\newcommand*{\mb}[1]{\mathbf{#1}}%
\newcommand*{\mc}[1]{\mathcal{#1}}%
\newcommand*{\defeq}{\overset{\triangle}{=}}%
\newcommand*{\bx}{\mathbf{x}}%
\newcommand*{\R}{\mathbb{R}}%
\newenvironment{proof}{\textbf{Proof:} }{\hfill$\square$}
\tikzstyle{startstop} = [rectangle, rounded corners, minimum width=3cm, minimum height=1cm,text centered, draw=black, fill=red!30]
\tikzstyle{process} = [rectangle, minimum width=3cm, minimum height=1cm, text centered, draw=black, fill=orange!30]
\tikzstyle{decision} = [diamond, minimum width=3cm, minimum height=1cm, text centered, draw=black, fill=green!30]
\tikzstyle{arrow} = [thick,->,>=stealth]
\newmdenv[
  linecolor=blue,
  linewidth=2pt,
  topline=false,
  bottomline=false,
  rightline=false,
  leftline=false,
  skipabove=\baselineskip,
  skipbelow=\baselineskip,
  innerleftmargin=10pt,
  innerrightmargin=10pt,
  innertopmargin=7pt,
  innerbottommargin=10pt,
  backgroundcolor=yellow!10
]{custombox}
\begin{document}

\begin{frontmatter}

\title{Anderson Acceleration for Distributed Constrained Optimization over Time-varying Networks\thanksref{footnoteinfo}} 

\thanks[footnoteinfo]{This paper was not presented at any IFAC meeting. Corresponding author Xuyang Wu. Email: wuxy6@sustech.edu.cn.}

\author[SUSTech]{Haijuan Liu}\ead{12431368@mail.sustech.edu.cn},    
\author[SUSTech]{Xuyang Wu}\ead{wuxy6@sustech.edu.cn}               
\address[SUSTech]{Southern University of Science and Technology, Shenzhen, China }    
\begin{keyword}                           
Anderson acceleration; Constrained optimization; Time-varying networks; Fenchel duality.               
\end{keyword}                             

\begin{abstract}                          
This paper applies the Anderson Acceleration (AA) technique to accelerate the Fenchel dual gradient method (FDGM) to solve constrained optimization problems over time-varying networks. AA is originally designed for accelerating fixed-point iterations, and its direct application to FDGM faces two challenges: 1) FDGM in time-varying networks cannot be formulated as a standard fixed-point update; 2) even if the network is fixed so that FDGM can be expressed as a fixed-point iteration, the direct application of AA is not distributively implementable. To overcome these challenges, we first rewrite each update of FDGM as inexactly solving several \emph{local} problems where each local problem involves two neighboring nodes only, and then incorporate AA to solve each local problem with higher accuracy, resulting in the Fenchel Dual Gradient Method with Anderson Acceleration (FDGM-AA). To guarantee global convergence of FDGM-AA, we equip it with a newly designed safe-guard scheme. Under mild conditions, our algorithm converges at a rate of \(O(1/\sqrt{k})\) for the primal sequence and \(O(1/k)\) for the dual sequence. The competitive performance of our algorithm is validated through numerical experiments.
\end{abstract}

\end{frontmatter}

\section{Introduction}
Many problems in networked systems are equivalent to finding the optimal solution of an optimization problem, such as economic dispatch in power systems \cite{yang2016distributed} and distributed learning \cite{lian2017can}. Distributed methods solve the problem through the collaboration of all nodes and via communications among neighboring nodes, which, compared to centralized methods, often yield better scalability and stronger privacy protection \cite{assran2020advances}.

In the last decades, a large number of distributed optimization methods have been proposed \cite{Aybat2018, margellos2017distributed, Nedic2017, Nedic2009, Nedic2010,qu2017harnessing, Shi2015, Shi2014, Wu2019, wu2022unifying, yang2016distributed,yi2021linear, Yuan2016}, most of which focus on consensus optimization \cite{Aybat2018, margellos2017distributed, Nedic2017, Nedic2009, Nedic2010, Shi2015, Shi2014, Wu2019, Yuan2016}. For consensus optimization, earlier methods are mainly primal and consensus-based, such as the distributed subgradient method \cite{Nedic2009}, the distributed projected subgradient method \cite{Nedic2010}, and the decentralized gradient descent method (DGD) \cite{Yuan2016}. These methods are simple but often yield slow convergence due to inexact convergence when using non-diminishing step-sizes. Some recent works mitigate this issue by proposing methods that can converge with fixed step-sizes, where two typical examples are EXTRA \cite{Shi2015} and DIGing \cite{Nedic2017,qu2017harnessing}. Specifically, EXTRA adds a correction term to DGD and DIGing combines DGD with a gradient-tracking technique, while they are lately shown to have a primal-dual nature \cite{wu2022unifying}. Other works in this category include the distributed ADMM method \citep{Aybat2018,Shi2014}, the Fenchel dual gradient method (FDGM)\cite{Wu2019}, and the distributed primal-dual method \cite{yi2021linear}, etc. Despite the rich literature, only a few can handle time-varying networks \cite{Nedic2009,Nedic2010,margellos2017distributed,Nedic2017,Wu2019}, among which FDGM \cite{Wu2019} is the only one that can simultaneously deal with local constraints and converge to the exact optimum with non-diminishing step-sizes.

To accelerate the solving of distributed constrained optimization over time-varying networks, we apply an Anderson acceleration (AA) technique \citep{Anderson1965} to the Fenchel dual gradient method. AA is an extrapolation technique for accelerating fixed-point iterations, and has been shown to be particularly effective in the centralized setting \cite{ZhangJz,mai2020anderson}. When applying AA to accelerate FDGM, it faces two challenges. First, FDGM cannot be formulated as a fixed-point iteration when the networks are time-varying. Second, even if the network is fixed so that FDGM can be described by the fixed-point iteration, the direct application of AA to FDGM cannot be implemented in a distributed way. To address these challenges, we first decouple FDGM as solving a set of local optimization problems, where each problem only involves two neighboring nodes. Then, we incorporate AA to solve each local problem, which gives FDGM-AA. We further design a safe-guard scheme to yield the global convergence of FGDM-AA. 

The main contributions of this paper are two-fold. First, we apply the effective Anderson acceleration technique to distributed optimization. Notably, the proposed FDGM-AA can solve optimization problems with local constraints over time-varying networks. Second, we theoretically prove that FDGM-AA can converge with fixed step-sizes under mild assumptions. The practical effectiveness of FDGM-AA is demonstrated by numerical experiments on classification tasks.

The rest of this paper is organized as follows: Section \ref{sec:Pre} gives problem formulation and preliminaries, and Section \ref{sec:alg_dev} develops the algorithm. Section \ref{sec:conv_ana} presents the convergence analysis. Section \ref{sec:Num_ex} presents numerical experimental results, and Section \ref{sec:Conc} concludes this paper.

\textbf{Notations and definitions:} We use \(\mathbb{R}^n\) to denote the \(n\)-dimensional Euclidean space and \(\|\cdot\|\) the Euclidean norm. For any two vectors $a,b\in\mathbb{R}^n$, \(\langle a, b \rangle\) represents their inner product, and for any two matrices $A$ and $B$, \(A\otimes B\) denotes their Kronecker product. For any $a\in \R$, \(\lfloor a\rfloor\) is the largest integer less than or equal to \(a\). We use $\mb{0}_n\in\R^n$, \(\mathbf{1}_n \in \mathbb{R}^n\), and $I_n\in \R^{n\times n}$ to denote the $n$-dimensional all-zero vector, all-one vector, and identity matrix, respectively, and ignore the subscript when it is clear from the context.

For any function $f: \mathbb{R}^n \to \mathbb{R}$, we say it is \emph{$\mu$-strongly convex} for some $\mu>0$ if $f(x)-\mu\|x\|^2/2$ is convex, and it is \emph{$L$-smooth} for some $L>0$ if it is differentiable and
$\| \nabla f({x}) - \nabla f({y}) \| \leq L \| {x} - {y} \|,\quad \forall x,y\in\mathbb{R}^n.$

\section{Consensus Optimization and Preliminaries}\label{sec:Pre}

This section first introduces consensus optimization over time-varying networks, and then reviews Fenchel dual gradient method and Anderson acceleration that are the foundations of our algorithm development in Section \ref{sec:alg_dev}.

\subsection{Consensus optimization on time-varying networks}

Consider a set $\mathcal{V}=\{1,\ldots,n\}$ of nodes, where each node $i\in\mathcal{V}$ has a local objective function \(f_i : \mathbb{R}^d \to \mathbb{R}\cup\{+\infty\}\). Consensus optimization aims at solving the minimum of the total objective function:
\begin{equation}\label{eq:problem}
\begin{array}{ll}
\underset{\bx\in\mathbb{R}^{nd}}{\operatorname{minimize}} ~& F(\mb{x})\defeq\sum_{i\in \mathcal{V}} f_i(x_i)\allowdisplaybreaks\\
\operatorname{subject~to} ~& x_1=\ldots=x_n,
\end{array}
\end{equation}
where $\mb{x}=(x_1^T, \ldots,x_n^T)^T$ and each $f_i$ is strongly convex.
\begin{assum}\label{assum:str_convex}
    Each \(f_i\) is $\mu$-strongly convex for some \(\mu>0\), and the interior of \(\bigcap_{i\in\mc{V}}~\mathrm{dom} f_i\) is non-empty.
\end{assum}
Note that Assumption \ref{assum:str_convex} requires neither the smoothness of $f_i$ nor its Lipschitz continuity, and allows each $f_i$ to incorporate the indicator function of a convex set.

We aim to solve problem \eqref{eq:problem} over networks with time-varying topologies. The networks are described by a sequence of undirected graphs \(\mathcal{G}^k = (\mathcal{V}, \mathcal{E}^k)\) for all \(k \ge 0\) where $\mathcal{V}$ is the vertex set and \(\mathcal{E}^k\) is the edge set at time \(k\). We also define $\mc{N}_i^k=\{j\mid \{i,j\}\in\mc{E}^k\}$ as the neighbor set of node $i$ at time $k$.


\subsection{Preliminaries}

\subsubsection{Fenchel dual gradient method}

The Fenchel dual gradient method (FDGM) \citep{Wu2019} solves problem \eqref{eq:problem} by solving its Fenchel dual problem:
\begin{equation}\label{eq:fenchel}
\begin{split}
     \underset{\mb{w}\in\R^{nd}}{\operatorname{minimize}} ~&~D(\mb{w})\defeq\sum_{i\in\mc{V}} d_i(w_i)\\
     \operatorname{subject~to} ~&~\sum_{i\in\mc{V}} w_i=\mb{0},
\end{split}
\end{equation} 
where $\mb{w}=(w_1^T, \ldots, w_n^T)^T$, \(d_i(w_i)=\max_{x_i} ~w_i^Tx_i - f_i(x_i)\) is the convex conjugate of \(f_i\), and \(\{w_i\}_{i\in\mathcal{V}}\) is the dual variable. Under Assumption \ref{assum:str_convex}, strong duality holds between problems (\ref{eq:problem}) and (\ref{eq:fenchel}), i.e., if \(\{w_i^{\star}\}_{i\in\mc{V}}\) is an optimal solution of the dual problem (\ref{eq:fenchel}), then \(\{\bar{x}_i(w_i^\star)\}_{i\in\mc{V}}\) is the optimal solution of the primal problem (\ref{eq:problem}), where, for any $w_i$, $\bar{x}_i(w_i) \defeq\arg\max_x~w_i^T x-f_i(x)$.
Moreover, Assumption \ref{assum:str_convex} guarantees the differentiability and $1/\mu$-smoothness of \(d_i\), where $\nabla d_i(w_i)=\bar{x}_i(w_i)$.

Given the differentiability of $D$, FDGM solves problem \eqref{eq:fenchel} by the weighted gradient methods:
\begin{equation}\label{eq:FDGM}
    \mb{w}^{k+1} = \mb{w}^k - \beta(H_{\mathcal{G}^k}\otimes I_d)\nabla D(\mb{w}^k),
\end{equation}
where $\beta>0$ is the step-size. For the matrix $H_{\mathcal{G}^k}\in\R^{n\times n}$, it is defined as: for a set of $\{h_{ij}^k\}_{\{i,j\}\in\mc{E}^k}$ satisfying $\underline{h}=\min_{k\ge 0}\min_{\{i,j\}\in\mc{E}^k} h_{ij}^k>0$, $[H_{\mathcal{G}^k}]_{ij}=-h_{ij}^k$ for $\{i,j\}\in\mc{E}^k$, $[H_{\mathcal{G}^k}]_{ii} =
\sum_{\ell\in \mathcal{N}_i^k} h_{i\ell}^k$, and $[H_{\mathcal{G}^k}]_{ij}=0$ otherwise.

By the structure of $H_{\mathcal{G}^k}$, we have 
\[\operatorname{Range}(H_{\mathcal{G}^k}\otimes I_d)\subseteq\{\mb{w}|~w_1+\ldots+w_n=\mb{0}\},\]
Therefore, if $\mb{w}^0$ is feasible to problem \eqref{eq:fenchel}, so are $\mb{w}^k$ $\forall k\ge 0$. Under mild conditions, FDGM converges to the optimal solution of \eqref{eq:fenchel} with fixed step-sizes, which, compared to existing methods that use diminishing step-sizes \citep{Nedic2010,margellos2017distributed}, usually yields faster practical convergence.

\begin{rem}
    For each node $i$ and at each iteration $k$, computing $\nabla d_i(w_i^k)$ and gathering $\nabla d_j(w_j^k)$, $j\in\mc{N}_i^k$ are the most expensive parts in terms of computation and communication, respectively. However, these dual gradients are used only once, causing the waste of information. By incorporating these historical information, the performance of FDGM may be further improved.
\end{rem}

\subsubsection{Anderson acceleration}
Anderson acceleration (AA) \citep{Anderson1965} uses historical information to accelerate fixed-point iterations:
\begin{equation}\label{eq:fixed-point}
    x^{k+1}=T(x^k),
\end{equation}
where \(T: \mathbb{R}^d \to \mathbb{R}^d\) is an operator. By assigning different $T$, the fixed-point iteration \eqref{eq:fixed-point} can describe a broad range of optimization methods. 

The basic idea of AA is to minimize an approximate optimality residual in a space formed by historical iterates. Specifically, at the $k$th iteration and for some $m\le k$, AA first considers the point of the form
\begin{equation}\label{eq:AA_xk1/2}
    x^{k+\frac{1}{2}}=\sum_{t=1}^m \alpha^{t,k}x^{k-m+t},
\end{equation}
where $\sum_{t=1}^m \alpha^{t,k}=1$. Define \(r^t = T(x^t) - x^t\), \(R^k=[r^{k-m+1}, \cdots, r^k]\), and $\alpha^k=(\alpha^{1,k}, \ldots, \alpha^{m,k})^T$. Next, AA approximates the residual at $x^{k+\frac{1}{2}}$ by
\begin{equation}\label{eq:AA_approx}
    T(x^{k+\frac{1}{2}})-x^{k+\frac{1}{2}}\approx R^k\alpha^k,
\end{equation}
and solve the ``optimal'' $\alpha^k$ by minimizing the residual
\begin{equation}\label{eq:uncon_alpha}
    \alpha^k=\arg\min_{\alpha\in\R^m: \mb{1}^T\alpha = 1} \left\|R^k \alpha \right\|_2.
\end{equation}
Finally, AA sets
\begin{equation}\label{eq:AA_final}
    x^{k+1} = \sum_{t=1}^m \alpha^{t,k} T(x^{k-m+t}),
\end{equation}
where the right-hand side approximates $T(x^{k+\frac{1}{2}})$. When $T$ is an affine operator, the approximation \eqref{eq:AA_approx} is accurate and $T(x^{k+\frac{1}{2}})=\sum_{t=1}^m \alpha^{t,k} T(x^{k-m+t})$. Although in the development of AA, $T$ is treated as an affine operator, in practice AA is widely used for non-affine operators and, to guarantee the global convergence of AA, additional safe-guard conditions are often incorporated.

\section{Algorithm Development}\label{sec:alg_dev}

In this section, we accelerate FDGM with AA, which faces two challenges. First, AA is designed for fixed-point iteration, while FDGM \eqref{eq:FDGM} is not a fixed-point iteration when the networks are time-varying. Second, even if the networks are fixed so that \eqref{eq:FDGM} can be described by \eqref{eq:fixed-point}, solving \eqref{eq:uncon_alpha} requires information from all nodes and does not have a distributed closed-form solution.

To address the two challenges, we equivalently transform the update \eqref{eq:FDGM} into approximate solving of a set of optimization problems, where each problem only involves two neighboring nodes. In particular, by restricting $\sum_{j \in \mathcal{N}_i^k} h_{ij}^k \le 1$, FDGM \eqref{eq:FDGM} can be equivalently rewritten as
\begin{align}
    w_i^{k+1} &= (1-\sum_{j\in\mathcal{N}^k_i} h^k_{ij})w_i^{k} +\sum_{j\in\mathcal{N}^k_i} h^k_{ij}w_{ij}^{k+\frac{1}{2}},\label{eq:separable_FDGM}
\end{align}
where for each $j\in\mc{N}_i^k$,
\begin{equation}\label{eq:wij1/2}
    w_{ij}^{k+\frac{1}{2}} = w_i^k-\beta(\nabla d_i(w_i^k)-\nabla d_j(w_j^k)).
\end{equation}
For each $\{i,j\}\in\mc{E}^k$, $(w_{ij}^{k+\frac{1}{2}}, w_{ji}^{k+\frac{1}{2}})$ is the result after one projected gradient descent step for solving
\begin{equation}\label{eq:gossip}
    \begin{array}{cc}
        \operatorname{minimize}~& d_i(w_{ij})+d_j(w_{ji})\\
        \operatorname{subject~to}~& w_{ij}+w_{ji} = w_i^k+w_j^k.
    \end{array}
\end{equation}
Following the above derivations, FDGM \eqref{eq:FDGM} can be viewed as \eqref{eq:separable_FDGM} where each pair of $(w_{ij}^{k+\frac{1}{2}}, w_{ji}^{k+\frac{1}{2}})$ is an approximate solution to \eqref{eq:gossip}.

\subsection{AA for solving \eqref{eq:gossip}}\label{ssec:gossip_AA}

We accelerate the update \eqref{eq:separable_FDGM} with AA to solve \eqref{eq:gossip}, which consists of four steps.

{\bf Step 1: dual gradient approximation}. We use $\nabla d_i,\nabla d_j$ at historical iterates to approximate their values at other points following the procedure of AA. Suppose that at the $k$th iteration, each node $i$ has a set $\mc{I}_{ij}^k$ of dual iterates and the corresponding dual gradients. The linear combination of these iterates is
\begin{equation}\label{eq:tilde_w}
    \tilde{w}_{ij}^{k+\frac{1}{2}}=\sum_{t\in \mc{I}_{ij}^k} \alpha_{ij}^{t,k} w_i^t
\end{equation}
where $\sum_{t\in \mc{I}_{ij}^k} \alpha_{ij}^{t,k}=1$, and we make the approximation
\begin{equation}\label{eq:AA_FDGM_approx}
    \nabla d_i(\tilde{w}_{ij}^{k+\frac{1}{2}})\approx \sum_{t\in \mc{I}_{ij}^k} \alpha_{ij}^{t,k} \nabla d_i(w_i^t).
\end{equation}
The above process is inspired by \eqref{eq:AA_xk1/2}--\eqref{eq:AA_approx} in AA. 

{\bf Step 2: coefficient determination.} We determine the coefficients $\{\alpha_{ij}^{t,k}\}_{t\in\mc{I}_{ij}^k}$ based on the KKT condition of \eqref{eq:gossip}: A pair of $(w_{ij}, w_{ji})$ is optimal to \eqref{eq:gossip} iff
\begin{equation}\label{eq:KKT}
    \nabla d_i(w_{ij}) = \nabla d_j(w_{ji}),\quad w_{ij}+w_{ji} = w_i^k+w_j^k.
\end{equation}
We approximate \eqref{eq:KKT} at $(\tilde{w}_{ij}^{k+\frac{1}{2}},\tilde{w}_{ji}^{k+\frac{1}{2}})$ using \eqref{eq:AA_FDGM_approx}:
\begin{equation}\label{eq:coefficients}
    \begin{array}{ll}
         \underset{\alpha_{ij}^k,\alpha_{ji}^k}{\operatorname{minimize}}~ &\left\|D_{ij}^k\alpha_{ij}^k-D_{ji}^k\alpha_{ji}^k\right\|^2\\
         \operatorname{subject~to}~ & W_{ij}^k\alpha_{ij}^k + W_{ji}^k\alpha_{ji}^k= w_i^k+w_j^k,\\
         &\mathbf{1}^T\alpha_{ij}^k=1,~ \mathbf{1}^T \alpha_{ji}^k=1,
    \end{array}
\end{equation}
where $D_{ij}^k$ and $W_{ij}^k$ are the stacking of $\nabla d_i(w_i^t)$ and $w_i^t$, $t\in\mc{I}_{ij}^k$, respectively. In \eqref{eq:coefficients}, $D_{ij}^k\alpha_{ij}^k$ and $D_{ji}^k\alpha_{ji}^k$ approximate $\nabla d_i(\tilde{w}_{ij}^{k+\frac{1}{2}})$ and $\nabla d_j(\tilde{w}_{ji}^{k+\frac{1}{2}})$, respectively, and we penalize their difference to approximate \eqref{eq:KKT}.

{\bf Step 3: projected steepest descent with approximate gradients.} We further perform a projected steepest descent based on the approximate gradient \eqref{eq:AA_FDGM_approx}:
\begin{equation}\label{eq:wij_bar}
    \begin{pmatrix}
    \bar{w}_{ij}^{k+\frac{1}{2}}\\
    \bar{w}_{ji}^{k+\frac{1}{2}}
\end{pmatrix}
=
\begin{pmatrix}
    \tilde{w}_{ij}^{k+\frac{1}{2}}\\
    \tilde{w}_{ji}^{k+\frac{1}{2}}
\end{pmatrix}
-\beta
\begin{pmatrix}
    D_{ij}^k\alpha_{ij}^k-D_{ji}^k\alpha_{ji}^k
 \\
   D_{ji}^k\alpha_{ji}^k-D_{ij}^k\alpha_{ij}^k
\end{pmatrix}.
\end{equation}
This step corresponds to \eqref{eq:AA_final} in AA.

{\bf Step 4: a safe-guard scheme.} Steps 1-3 are based on the approximation \eqref{eq:AA_FDGM_approx} while it is accurate only when $\nabla d_i$ is an affine operator. The inexact approximation may cause the algorithm to diverge even for the centralized AA, e.g., proximal gradient method with AA \citep{mai2020anderson}. To guarantee global convergence, we propose a safe-guard scheme, which accepts $(\bar{w}_{ij}^{k+\frac{1}{2}}, \bar{w}_{ji}^{k+\frac{1}{2}})$ only when it yields sufficient descent in the objective value: for some \(c_1, c_2 > 0\), set \((w_{ij}^{k+\frac{1}{2}}, w_{ji}^{k+\frac{1}{2}}) = (\bar{w}_{ij}^{k+\frac{1}{2}}, \bar{w}_{ji}^{k+\frac{1}{2}})\) if
\begin{equation}\label{eq:fixed_safe}
 \begin{array}{ll}
& d_i(\bar{w}_{ij}^{k+\frac{1}{2}}) + d_j(\bar{w}_{ji}^{k+\frac{1}{2}}) - d_i(w_i^k) - d_j(w_j^k) \\ 
& \le \min\big\{-c_1\big\| \nabla d_i(w_i^k) - \nabla d_j(w_j^k) \big\|^2,\\
&-c_2\big( \|\bar{w}_{ij}^{k+\frac{1}{2}} - w_i^k\|^2 + \|\bar{w}_{ji}^{k+\frac{1}{2}} - w_j^k\|^2\big) \big\}.
\end{array}
\end{equation}
Otherwise, set \((w_{ij}^{k+\frac{1}{2}}, w_{ji}^{k+\frac{1}{2}})\) according to \eqref{eq:wij1/2}. The verification of \eqref{eq:fixed_safe} requires node $i$ to know $d_j(\bar{w}_{ji}^{k+\frac{1}{2}})$, which leads to higher computation and communication costs but can be replaced by a simpler sufficient condition
\begin{equation}\label{eq:simple_safe}
\begin{array}{ll}
& \langle \nabla d_i(w_i^k), \bar{w}_{ij}^{k+\frac{1}{2}}-w_i^k\rangle+\frac{L}{2}\|\bar{w}_{ij}^{k+\frac{1}{2}}-w_i^k\|^2\\
&+\langle \nabla d_j(w_j^k), \bar{w}_{ji}^{k+\frac{1}{2}}-w_j^k\rangle+\frac{L}{2}\|\bar{w}_{ji}^{k+\frac{1}{2}}-w_j^k\|^2\\ 
& \le \min\big\{-c_1\big\| \nabla d_i(w_i^k) - \nabla d_j(w_j^k) \big\|^2,\\
&-c_2\big( \|\bar{w}_{ij}^{k+\frac{1}{2}} - w_i^k\|^2 + \|\bar{w}_j^{k+\frac{1}{2}} - w_j^k\|^2\big) \big\},
\end{array}
\end{equation}
where $L=1/\mu$ is the smoothness parameter of $d_i$ and the left-hand side is an upper bound of that of \eqref{eq:fixed_safe}. Moreover, \eqref{eq:fixed_safe} yields a favorable property, which is the key to our convergence analysis in Section \ref{sec:conv_ana}.

\begin{lem}\label{lem:always_descent}
    Suppose that Assumption \ref{assum:str_convex} holds. If
    \begin{equation}\label{eq:step-cond}
        \beta\in (0, 1/L),
    \end{equation}
    then with the safe-guard scheme, it holds that
    \begin{equation}\label{eq:always_descent}
    \begin{array}{ll}
        & d_i(w_{ij}^{k+\frac{1}{2}}) + d_j(w_{ji}^{k+\frac{1}{2}}) - d_i(w_i^k) - d_j(w_j^k)\\
        & \le \min\big\{-\theta_1\left\| \nabla d_i(w_i^k) - \nabla d_j(w_j^k) \right\|^2,\\
        &-\theta_2\big( \|w_{ij}^{k+\frac{1}{2}} - w_i^k\|^2 + \|w_{ji}^{k+\frac{1}{2}} - w_j^k\|^2\big) \big\},
    \end{array}
    \end{equation}
    where $\theta_1\!=\!\min\{\beta(1\!-\!\beta L), c_1\},\theta_2\!=\!\min\{(1/\beta-L)/2,c_2\}$.
    \end{lem}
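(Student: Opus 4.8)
The plan is to establish \eqref{eq:always_descent} separately for the two branches of the safe-guard scheme and then note that, in each branch, the bound we obtain dominates \emph{both} arguments of the $\min$ on the right-hand side of \eqref{eq:always_descent}. Throughout I would use the fact, recorded right after \eqref{eq:fenchel}, that Assumption \ref{assum:str_convex} makes each $d_i$ convex and $L$-smooth with $L=1/\mu$, so that the descent lemma $d_i(y)\le d_i(w_i^k)+\langle\nabla d_i(w_i^k),y-w_i^k\rangle+\tfrac{L}{2}\|y-w_i^k\|^2$ is available (and likewise for $d_j$). I would also note once, as a side remark, that the left-hand side of \eqref{eq:simple_safe} is exactly this quadratic upper bound summed over $i$ and $j$, which is why \eqref{eq:simple_safe} implies \eqref{eq:fixed_safe}; this is not needed for the lemma itself.

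Case 1: the accelerated candidate is accepted, so $(w_{ij}^{k+\frac{1}{2}},w_{ji}^{k+\frac{1}{2}})=(\bar w_{ij}^{k+\frac{1}{2}},\bar w_{ji}^{k+\frac{1}{2}})$ and the test \eqref{eq:fixed_safe} holds verbatim. Since $\theta_1=\min\{\beta(1-\beta L),c_1\}\le c_1$ and $\theta_2=\min\{(1/\beta-L)/2,c_2\}\le c_2$, the right-hand side of \eqref{eq:fixed_safe} is no larger than that of \eqref{eq:always_descent}, so after substituting $(w_{ij}^{k+\frac{1}{2}},w_{ji}^{k+\frac{1}{2}})$ for the barred quantities, \eqref{eq:always_descent} follows with nothing further to do.

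Case 2: the candidate is rejected and $(w_{ij}^{k+\frac{1}{2}},w_{ji}^{k+\frac{1}{2}})$ is set by \eqref{eq:wij1/2}. Writing $g:=\nabla d_i(w_i^k)-\nabla d_j(w_j^k)$, one has $w_{ij}^{k+\frac{1}{2}}-w_i^k=-\beta g$ and $w_{ji}^{k+\frac{1}{2}}-w_j^k=\beta g$, hence $\|w_{ij}^{k+\frac{1}{2}}-w_i^k\|^2+\|w_{ji}^{k+\frac{1}{2}}-w_j^k\|^2=2\beta^2\|g\|^2$. Adding the descent lemma for $d_i$ at $w_{ij}^{k+\frac{1}{2}}$ and for $d_j$ at $w_{ji}^{k+\frac{1}{2}}$, the cross terms collapse to $\langle g,-\beta g\rangle=-\beta\|g\|^2$ and the quadratic terms sum to $L\beta^2\|g\|^2$, giving $d_i(w_{ij}^{k+\frac{1}{2}})+d_j(w_{ji}^{k+\frac{1}{2}})-d_i(w_i^k)-d_j(w_j^k)\le-\beta(1-\beta L)\|g\|^2$, with $\beta(1-\beta L)>0$ by \eqref{eq:step-cond}. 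I would then present this single bound two ways: it is $\le-\theta_1\|g\|^2$ since $\theta_1\le\beta(1-\beta L)$, and, using $\|g\|^2=\tfrac{1}{2\beta^2}(\|w_{ij}^{k+\frac{1}{2}}-w_i^k\|^2+\|w_{ji}^{k+\frac{1}{2}}-w_j^k\|^2)$ together with $\beta(1-\beta L)=\tfrac{1/\beta-L}{2}\cdot 2\beta^2$, it equals $-\tfrac{1/\beta-L}{2}(\|w_{ij}^{k+\frac{1}{2}}-w_i^k\|^2+\|w_{ji}^{k+\frac{1}{2}}-w_j^k\|^2)\le-\theta_2(\|w_{ij}^{k+\frac{1}{2}}-w_i^k\|^2+\|w_{ji}^{k+\frac{1}{2}}-w_j^k\|^2)$ since $\theta_2\le(1/\beta-L)/2$. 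Taking the smaller of the two right-hand sides yields \eqref{eq:always_descent}.

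Neither branch is technically deep; the only step requiring care is the bookkeeping in Case 2, namely that one must re-express a single inequality so that it simultaneously dominates both entries of the $\min$, which hinges on the exact identity $\beta(1-\beta L)=\tfrac{1/\beta-L}{2}\cdot 2\beta^2$ and on the fallback update \eqref{eq:wij1/2} producing equal-norm displacements $\beta\|g\|$ at the two endpoints. I would therefore organize the write-up so that the identity and the displacement computation are stated explicitly before the case split, and then each case reduces to a one-line citation of \eqref{eq:fixed_safe} (Case 1) or of the summed descent lemma (Case 2).
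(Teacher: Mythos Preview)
Your proposal is correct and follows essentially the same approach as the paper's proof: split on whether the safe-guard accepts the accelerated candidate (where \eqref{eq:fixed_safe} directly gives \eqref{eq:always_descent} since $\theta_1\le c_1$, $\theta_2\le c_2$) or falls back to \eqref{eq:wij1/2} (where the summed descent lemma yields the bound $-\beta(1-\beta L)\|g\|^2$). The paper's Appendix~A is terser and does not spell out the re-expression via $\beta(1-\beta L)=\tfrac{1/\beta-L}{2}\cdot 2\beta^2$, but the underlying argument is identical.
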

    \begin{proof}
        See Appendix A.
    \end{proof}

\subsection{Main algorithm} 

By integrating the gossip update in Section \ref{ssec:gossip_AA} into the aggregation scheme \eqref{eq:separable_FDGM}, we propose a Fenchel Dual Gradient Method with Anderson Acceleration (FDGM-AA) algorithm. To introduce the algorithm, we let
\begin{equation}\label{eq:xi_update}
x_i^k = \arg\max_{x_i}~\langle w_i^k, x_i \rangle-f_i(x_i)
\end{equation}
for each $i\in\mc{V}$ and $k\ge 0$, which satisfies
\begin{equation}\label{eq:dual_value}
    \nabla d_i(w_i^k)=x_i^k,\quad d_i(w_i^k)=\langle w_i^k, x_i^k\rangle-f_i(x_i^k)
\end{equation}
by Danskin's Theorem \cite{bertsekas2016nonlinear}. Moreover, we choose $\mc{I}_{ij}^k=\mc{I}_{ji}^k\ni k$ as follows: For some $m>0$, if nodes $i,j$ has communicated more than $m-1$ times, it is the index set of the $m$ most recent iterations where they communicate; otherwise, it is the index set of all past iteration iterations where they communicate. A detailed implementation is provided in Algorithm \ref{alg:FDGAA}.

\begin{algorithm}
\caption{FDGM-AA}
\label{alg:FDGAA}
\begin{algorithmic}[1]
\State \textbf{Initialization:} all nodes collaboratively determine the step-size $\beta$, the Lipschitz constant $L$, the safe-guard constants $c_1,c_2$, and the initial dual iterate $w_i^0$ satisfying $\sum_{i \in \mathcal{V}} w_i^0 = 0$.
\For{$k = 0, 1, 2, \dots$}
    \For {each node $i\in\mc{V}$ satisfying $\mc{N}_i^k\ne\emptyset$}
    \State Solve $x_i^k$ according to \eqref{eq:xi_update}.
    \State Compute $d_i(w_i^k)$ and $\nabla d_i(w_i^k)$ by \eqref{eq:dual_value}.
    \For {each neighbor $j\in\mc{N}_i^k$}
        \State Send $(\nabla d_i(w_i^k), d_i(w_i^k))$ to node $j$.
        \State Receive $(\nabla d_j(w_j^k), d_j(w_j^k))$ from node $j$.
        \State Update $\mc{I}_{ij}^k$ and solve $\alpha_{ij}^k,\alpha_{ji}^k$ from \eqref{eq:coefficients}.
        \State Solve $(\bar{w}_{ij}^{k+\frac{1}{2}}, \bar{w}_{ji}^{k+\frac{1}{2}})$ from \eqref{eq:wij_bar} and \eqref{eq:tilde_w}.
        \If{safe-guard condition \eqref{eq:simple_safe} holds}
        \State Set $(w_{ij}^{k+\frac{1}{2}}, w_{ji}^{k+\frac{1}{2}})=(\bar{w}_{ij}^{k+\frac{1}{2}}, \bar{w}_{ji}^{k+\frac{1}{2}})$.
        \Else
        \State Set \((w_{ij}^{k+\frac{1}{2}}, w_{ji}^{k+\frac{1}{2}})\) according to \eqref{eq:wij1/2}.
        \EndIf
    \EndFor
    \State Compute $w_i^{k+1}$ by \eqref{eq:separable_FDGM}.
    \EndFor
\EndFor
\end{algorithmic}
\end{algorithm}

Incorporating the AA scheme into FDGM leads to a significantly higher storage cost (historical dual iterates and dual gradients), but only slightly higher communication and computation costs. Compared to FDGM, at each iteration of Algorithm \ref{alg:FDGAA}, each node communicates only one additional bit (dual value) with each neighbor, and solves one additional quadratic program \eqref{eq:coefficients} whose complexity can be much lower than solving \eqref{eq:xi_update} with complex $f_i$. Given the inherent difficulty of consensus optimization over time-varying networks, trading increased memory for accelerated convergence can be worthwhile, particularly when storage is sufficient.

\section{Convergence Analysis}\label{sec:conv_ana}
This section analyses the convergence of FDGM-AA in solving problem \eqref{eq:problem}. To this end, we impose the following conditions on the networks.

\begin{assum}[B-connectivity]\label{assum:connect}
    There exists an integer \(B > 0\) such that the union graph \( (\mathcal{V}, \bigcup_{t=k}^{k+B-1} \mathcal{E}^t)\) is connected for any \( k\ge 0\). 
\end{assum}

Assumption \ref{assum:connect} allows the network to be disconnected and is standard for time-varying networks \cite{Nedic2017,Wu2019}.

For each \(k \ge 0\), let \(\tilde{\mathcal{G}}^k = (\mathcal{V}, \tilde{\mathcal{E}}^k)\) where \(\tilde{\mathcal{E}}^k=\cup_{t=k}^{k+B-1} \mathcal{E}^t\), \(\mc{L}_{\tilde{\mathcal{G}}^k}\) be the graph Laplacian of \(\tilde{\mathcal{G}}^k\), \(\eta^k\) be the maximum degree of \(\tilde{\mathcal{G}}^k\), and  \( \tilde{\eta} = \max_{t\ge 0} \eta^{tB}\). Also define \( S_0 = \{ \mb{w} \mid D(\mb{w}) \le D(\mb{w}^0),~w_1+\ldots+w_n=\mb{0}\} \). It follows from \cite[Proposition 3]{Wu2019} that under Assumption~\ref{assum:str_convex}, $S_0$ is compact, so that $R_0=\max\{\|\mb{w}-\mb{u}\|\mid \mb{w},\mb{u}\in S_0\}<+\infty$.
\begin{thm}\label{thm:vary}
    Suppose that Assumptions \ref{assum:str_convex} and \ref{assum:connect} hold. Let the sequences \(\{\mb{w}^k\}_{k=0}^{\infty}\) and \(\{\mb{x}^k\}_{k=0}^{\infty}\) be generated by Algorithm \ref{alg:FDGAA}. If $\sum_{j \in \mathcal{N}_i^k} h_{ij}^k \le 1$ and \(\beta\) satisfies \eqref{eq:step-cond}, then
    \begin{align}
        &D(\mb{w}^{k})-D^\star\le \frac{\tau R_0^2(D(\mb{w}^0)-D^\star)}{\tau R_0^2+\underline{\lambda}\lfloor k/B \rfloor(D(\mb{w}^0)-D^\star)},\label{eq:dual_rate}\allowdisplaybreaks\\
        &\|\mb{x}^k-\mb{x}^\star\|\le \sqrt{2L(D(\mb{w}^{k})-D^\star)},\label{eq:prim_var_rate}\allowdisplaybreaks\\
        & |F(\mb{x}^k)\!-\!F^\star|\le (R_0 + \|\mb{w}^0\|)\sqrt{2L(D(\mb{w}^{k})-D^\star)},\label{eq:prim_func_rate}
\end{align}
 where $D^\star$ is the optimal value of problem \eqref{eq:fenchel}, $\bx^\star$ and $F^\star$ are the optimal solution and optimal value of problem \eqref{eq:problem}, respectively, \(\tau= \frac{3BL^2\tilde{\eta}}{\theta_2}+\frac{3}{\underline{h}\theta_1}\), and \(\underline{\lambda}= \inf_{t\in\{0,1,\cdots\}} \lambda_{\min} (\mc{L}_{\mathcal{\tilde{G}}^{kB}})\) with $\lambda_{\min}(\mc{L}_{\mathcal{\tilde{G}}^{kB}})>0$ being the smallest nonzero eigenvalue of $\mc{L}_{\mathcal{\tilde{G}}^{kB}}$.
\end{thm}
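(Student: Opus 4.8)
The whole argument is powered by Lemma~\ref{lem:always_descent}, which at every iteration gives both a decrease of the dual objective $D$ and a bound on $\|\mb{w}^{k+1}-\mb{w}^k\|$. The plan is to: turn these into a per-iteration estimate; accumulate it over a window of $B$ iterations so that Assumption~\ref{assum:connect} can be used to replace the time-varying Laplacians by the union Laplacian; combine the result with an error bound relating the Laplacian-weighted residual to the dual gap; solve the resulting scalar recursion to obtain \eqref{eq:dual_rate}; and recover the primal rates from Fenchel duality. For the first step, since $\sum_{j\in\mc{N}_i^k}h_{ij}^k\le1$, the aggregation \eqref{eq:separable_FDGM} writes $w_i^{k+1}$ as a convex combination of $w_i^k$ and $\{w_{ij}^{k+\frac12}\}_j$; applying Jensen's inequality to the convex $d_i$, summing over $i\in\mc{V}$, regrouping the cross-terms edge by edge, and invoking Lemma~\ref{lem:always_descent} together with $\nabla d_i(w_i^k)=x_i^k$ gives
\[ D(\mb{w}^{k+1})\le D(\mb{w}^k)-\underline{h}\,\theta_1\sum\nolimits_{\{i,j\}\in\mc{E}^k}\|x_i^k-x_j^k\|^2, \]
and, from the $\theta_2$-branch of \eqref{eq:always_descent}, $\|\mb{w}^{k+1}-\mb{w}^k\|^2\le\theta_2^{-1}(D(\mb{w}^k)-D(\mb{w}^{k+1}))$. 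In particular $D(\mb{w}^k)$ is non-increasing, so every $\mb{w}^k$ lies in the compact set $S_0$ and $\|\mb{w}^k-\mb{u}\|\le R_0$ for all $\mb{u}\in S_0$.

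\textbf{Windowing.} Fix a multiple $k'$ of $B$. Summing the decrease inequality over $t=k',\dots,k'+B-1$ gives $D(\mb{w}^{k'})-D(\mb{w}^{k'+B})\ge\underline{h}\,\theta_1\sum_{t=k'}^{k'+B-1}\sum_{\{i,j\}\in\mc{E}^t}\|x_i^t-x_j^t\|^2$. To expose the union graph $\tilde{\mathcal{G}}^{k'}$, I bound, for each edge $\{i,j\}$ and each $t$ in the window, $\|x_i^{k'}-x_j^{k'}\|^2\le 3\|x_i^t-x_j^t\|^2+3\|x_i^{k'}-x_i^t\|^2+3\|x_j^{k'}-x_j^t\|^2$; summing over $\{i,j\}\in\mc{E}^t$ (each node has degree at most $\tilde\eta$ in $\mathcal{G}^t\subseteq\tilde{\mathcal{G}}^{k'}$) and then over $t$, and using that every edge of $\tilde{\mathcal{G}}^{k'}$ appears in at least one $\mc{E}^t$, I get
\[ (\mb{x}^{k'})^T(\mc{L}_{\tilde{\mathcal{G}}^{k'}}\otimes I_d)\mb{x}^{k'}\le 3\sum\nolimits_{t=k'}^{k'+B-1}\sum\nolimits_{\{i,j\}\in\mc{E}^t}\|x_i^t-x_j^t\|^2+3\tilde\eta\sum\nolimits_{t=k'}^{k'+B-1}\|\mb{x}^{k'}-\mb{x}^t\|^2. \]
The drift term is controlled via $\|\mb{x}^{k'}-\mb{x}^t\|\le L\|\mb{w}^{k'}-\mb{w}^t\|\le L\sum_{s=k'}^{t-1}\|\mb{w}^{s+1}-\mb{w}^s\|$, then Cauchy--Schwarz, the Step-1 step-size bound, and monotonicity of $D$; the first double sum is at most $(\underline{h}\theta_1)^{-1}(D(\mb{w}^{k'})-D(\mb{w}^{k'+B}))$. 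Collecting constants yields $(\mb{x}^{k'})^T(\mc{L}_{\tilde{\mathcal{G}}^{k'}}\otimes I_d)\mb{x}^{k'}\le\tau\,(D(\mb{w}^{k'})-D(\mb{w}^{k'+B}))$ with $\tau$ as in the statement.

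\textbf{Error bound, recursion, primal rates.} Since $\mb{x}^{\star}=\nabla D(\mb{w}^\star)=\mathbf{1}_n\otimes x^\star$ is orthogonal to $\{\mb{v}:\sum_iv_i=\mb{0}\}$ and $\mb{w}^{k'},\mb{w}^\star\in S_0$, convexity of $D$ gives $D(\mb{w}^{k'})-D^\star\le\langle\mb{x}^{k'},\mb{w}^{k'}-\mb{w}^\star\rangle=\langle P\mb{x}^{k'},\mb{w}^{k'}-\mb{w}^\star\rangle\le R_0\|P\mb{x}^{k'}\|$, where $P$ is the orthogonal projection onto $\{\mb{v}:\sum_iv_i=\mb{0}\}$; and because $\tilde{\mathcal{G}}^{k'}$ is connected (Assumption~\ref{assum:connect}), $\ker\mc{L}_{\tilde{\mathcal{G}}^{k'}}=\mathrm{span}\,\mathbf{1}_n$, so $(\mb{x}^{k'})^T(\mc{L}_{\tilde{\mathcal{G}}^{k'}}\otimes I_d)\mb{x}^{k'}\ge\underline{\lambda}\|P\mb{x}^{k'}\|^2$. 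Hence $(\mb{x}^{k'})^T(\mc{L}_{\tilde{\mathcal{G}}^{k'}}\otimes I_d)\mb{x}^{k'}\ge(\underline{\lambda}/R_0^2)(D(\mb{w}^{k'})-D^\star)^2$, which combined with the previous paragraph gives, for $e_k:=D(\mb{w}^{kB})-D^\star$, that $e_k-e_{k+1}\ge(\underline{\lambda}/(\tau R_0^2))e_k^2\ge(\underline{\lambda}/(\tau R_0^2))e_ke_{k+1}$ (using $e_{k+1}\le e_k$). Dividing by $e_ke_{k+1}$, telescoping $1/e_k\ge1/e_0+\underline{\lambda}k/(\tau R_0^2)$ (the case $e_k=0$ being immediate), and monotonicity of $D$ yield \eqref{eq:dual_rate}. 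For \eqref{eq:prim_var_rate}, the sharpened descent inequality for the convex $1/\mu$-smooth $D$, namely $D(\mb{w}^k)\ge D(\mb{w}^\star)+\langle\nabla D(\mb{w}^\star),\mb{w}^k-\mb{w}^\star\rangle+\tfrac{1}{2L}\|\mb{x}^k-\mb{x}^\star\|^2$, loses its middle term by the same orthogonality, giving $\|\mb{x}^k-\mb{x}^\star\|^2\le2L(D(\mb{w}^k)-D^\star)$. Finally, \eqref{eq:dual_value} gives $F(\mb{x}^k)=\langle\mb{w}^k,\mb{x}^k\rangle-D(\mb{w}^k)$, so with $F^\star=-D^\star$ (strong duality) and $\langle\mb{w}^k,\mb{x}^\star\rangle=0$, $F(\mb{x}^k)-F^\star=\langle\mb{w}^k,\mb{x}^k-\mb{x}^\star\rangle-(D(\mb{w}^k)-D^\star)\le\|\mb{w}^k\|\,\|\mb{x}^k-\mb{x}^\star\|$, while convexity of $F$ at $\mb{x}^\star$ with $\mb{w}^\star\in\partial F(\mb{x}^\star)$ gives the lower bound $F(\mb{x}^k)-F^\star\ge\langle\mb{w}^\star,\mb{x}^k-\mb{x}^\star\rangle\ge-\|\mb{w}^\star\|\,\|\mb{x}^k-\mb{x}^\star\|$; since $\|\mb{w}^k\|,\|\mb{w}^\star\|\le\|\mb{w}^0\|+R_0$, substituting \eqref{eq:prim_var_rate} yields \eqref{eq:prim_func_rate}.

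\textbf{Main obstacle.} The delicate part is the windowing step: passing from the instantaneous decrease, which controls only the disagreement along the currently active edges $\mc{E}^t$, to a decrease governed by the union Laplacian $\mc{L}_{\tilde{\mathcal{G}}^{kB}}$ requires simultaneously quantifying how far $\mb{x}^t$ drifts from $\mb{x}^{kB}$ within a window — which is exactly where the step-size bound from Lemma~\ref{lem:always_descent} is indispensable — and bounding the degrees of the time-varying graphs uniformly by $\tilde\eta$; assembling these bounds into the precise constant $\tau$ is the bulk of the technical work, while the error bound, the recursion, and the primal estimates are routine Fenchel-duality and convexity manipulations.
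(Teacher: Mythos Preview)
Your proof is correct and follows the same route as the paper: Jensen on the convex combination \eqref{eq:separable_FDGM} together with Lemma~\ref{lem:always_descent} for the per-iteration descent and step-size bound; a windowing argument over $B$ steps to pass to the union Laplacian; the convexity/$R_0$ error bound; solving the scalar recursion; and Fenchel-duality manipulations for the primal rates.

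The one place your write-up does not quite land on the stated constant is the windowing aggregation. By applying the three-term split for \emph{every} $t$ in the window and every $\{i,j\}\in\mc{E}^t$, and then summing over $t$, your drift term becomes $3\tilde\eta\sum_{t=k'}^{k'+B-1}\|\mb{x}^{k'}-\mb{x}^t\|^2$; after Cauchy--Schwarz and the step bound this gives $3B^2L^2\tilde\eta/\theta_2$, not $3BL^2\tilde\eta/\theta_2$. The paper instead picks, for each union edge $\{i,j\}\in\tilde{\mc{E}}^k$, a \emph{single} time $t_{ij}^k\in[k,k+B-1]$ at which that edge is active and applies the split $\|x_i^k-x_j^k\|^2\le 3\|x_i^{t_{ij}^k}-x_j^{t_{ij}^k}\|^2+3\|x_i^k-x_i^{t_{ij}^k}\|^2+3\|x_j^k-x_j^{t_{ij}^k}\|^2$ once per union edge. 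The drift contribution is then a single sum over $\{i,j\}\in\tilde{\mc{E}}^k$ (bounded via the degree $\tilde\eta$ of the union graph), and the Cauchy--Schwarz step introduces only one factor of $B$, yielding exactly $\tau=3BL^2\tilde\eta/\theta_2+3/(\underline{h}\theta_1)$. With this small change your argument matches the paper's proof.
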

\begin{proof}
    See Appendix B.
\end{proof}

\begin{figure*}[t]
    \centering
    \includegraphics[width=0.8\textwidth]{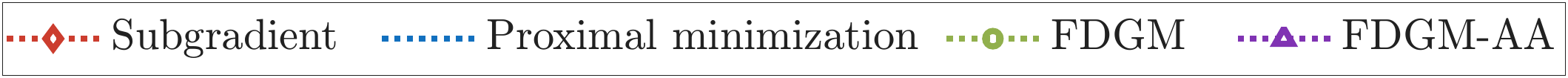}
\end{figure*}
\begin{figure*}[t]
\vspace{-0.1cm}
    \centering
    \begin{subfigure}[t]{0.32\textwidth}
        \centering
        \includegraphics[width=\linewidth,height=40mm]{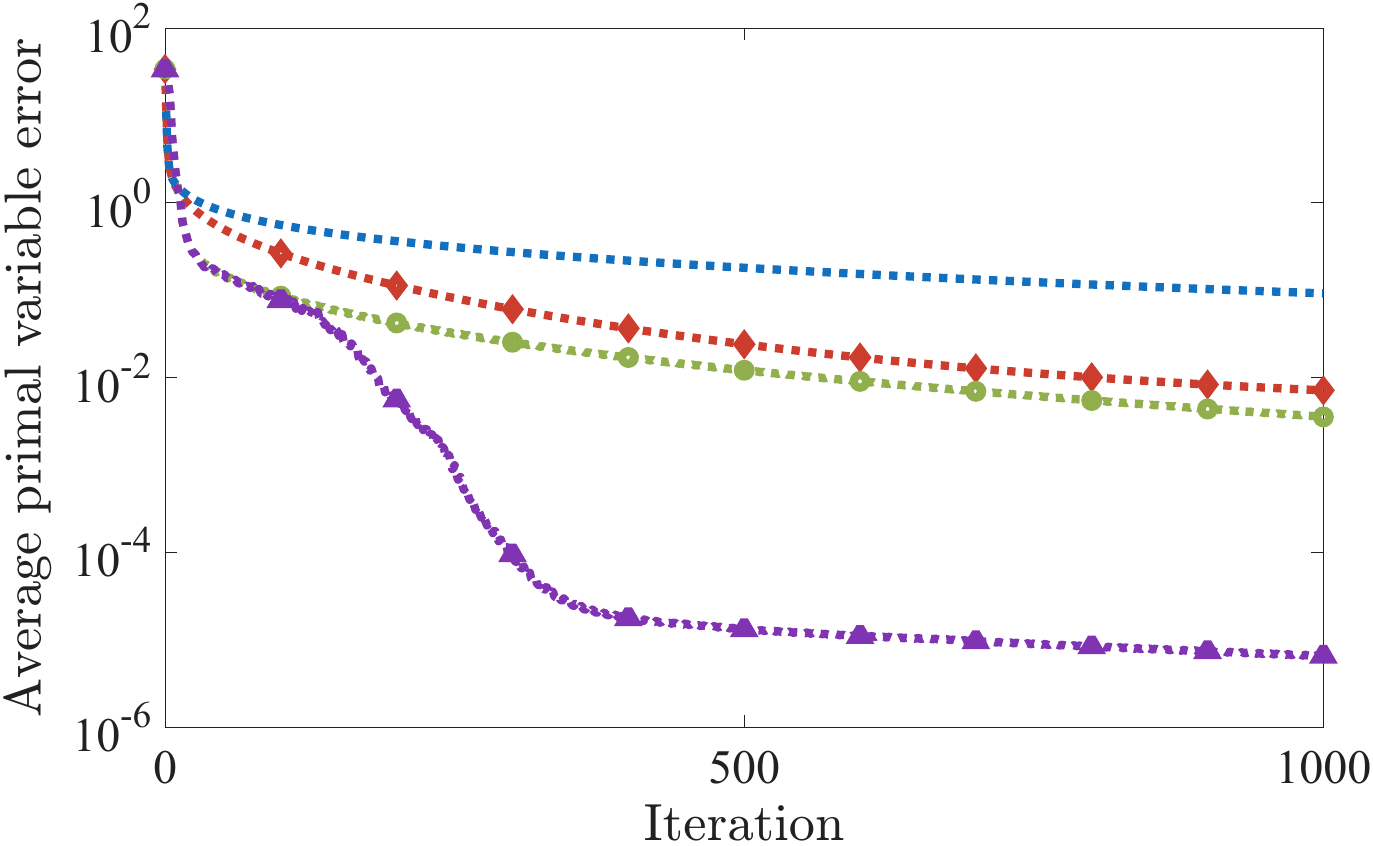}
        \caption{$B=5,\ \lambda=0.01.$}
        \label{fig:1a}
    \end{subfigure}\hfill
    \begin{subfigure}[t]{0.32\textwidth}
        \centering
        \includegraphics[width=\linewidth,height=40mm]{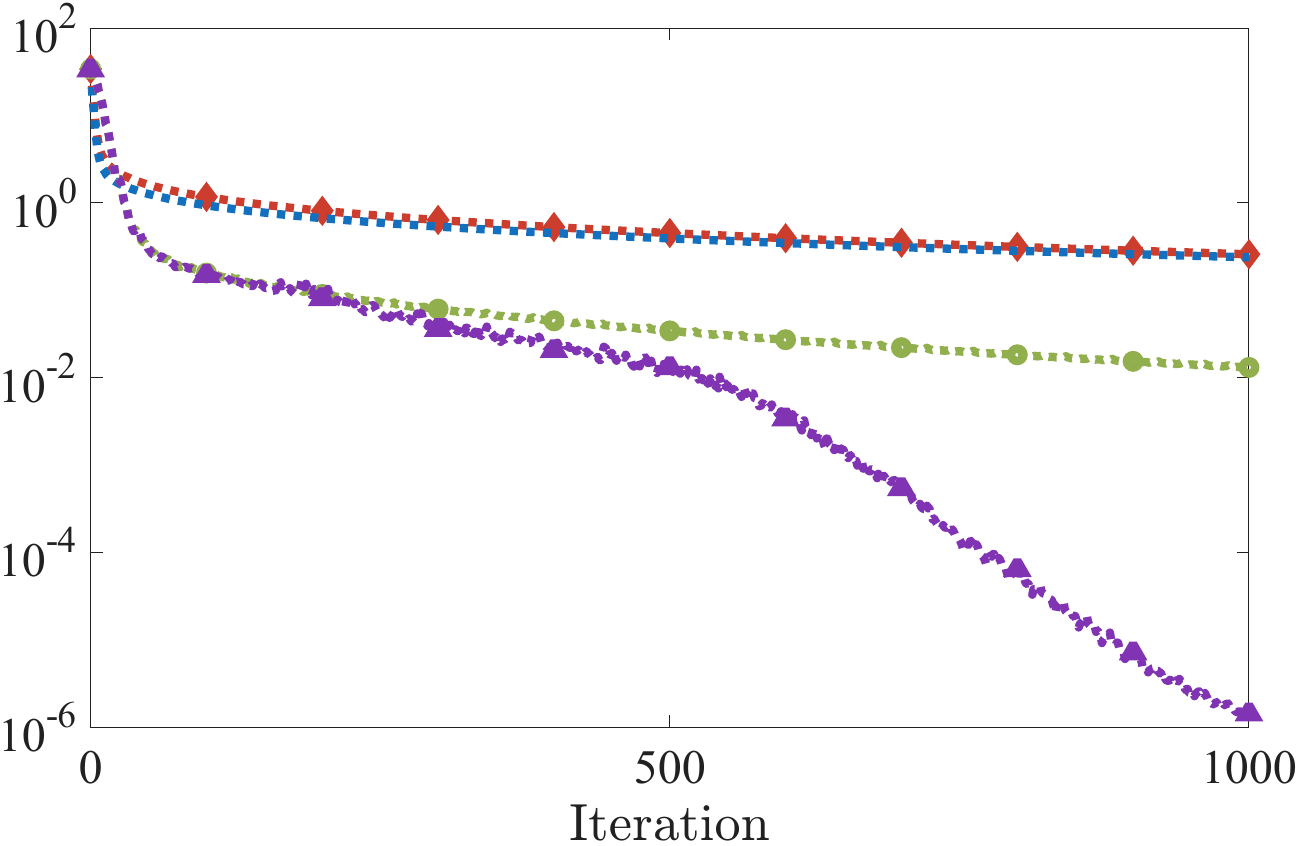}
        \caption{$B=20,\ \lambda=0.01.$}
        \label{fig:1b}
    \end{subfigure}\hfill
    \begin{subfigure}[t]{0.32\textwidth}
        \centering
        \includegraphics[width=\linewidth,height=40mm]{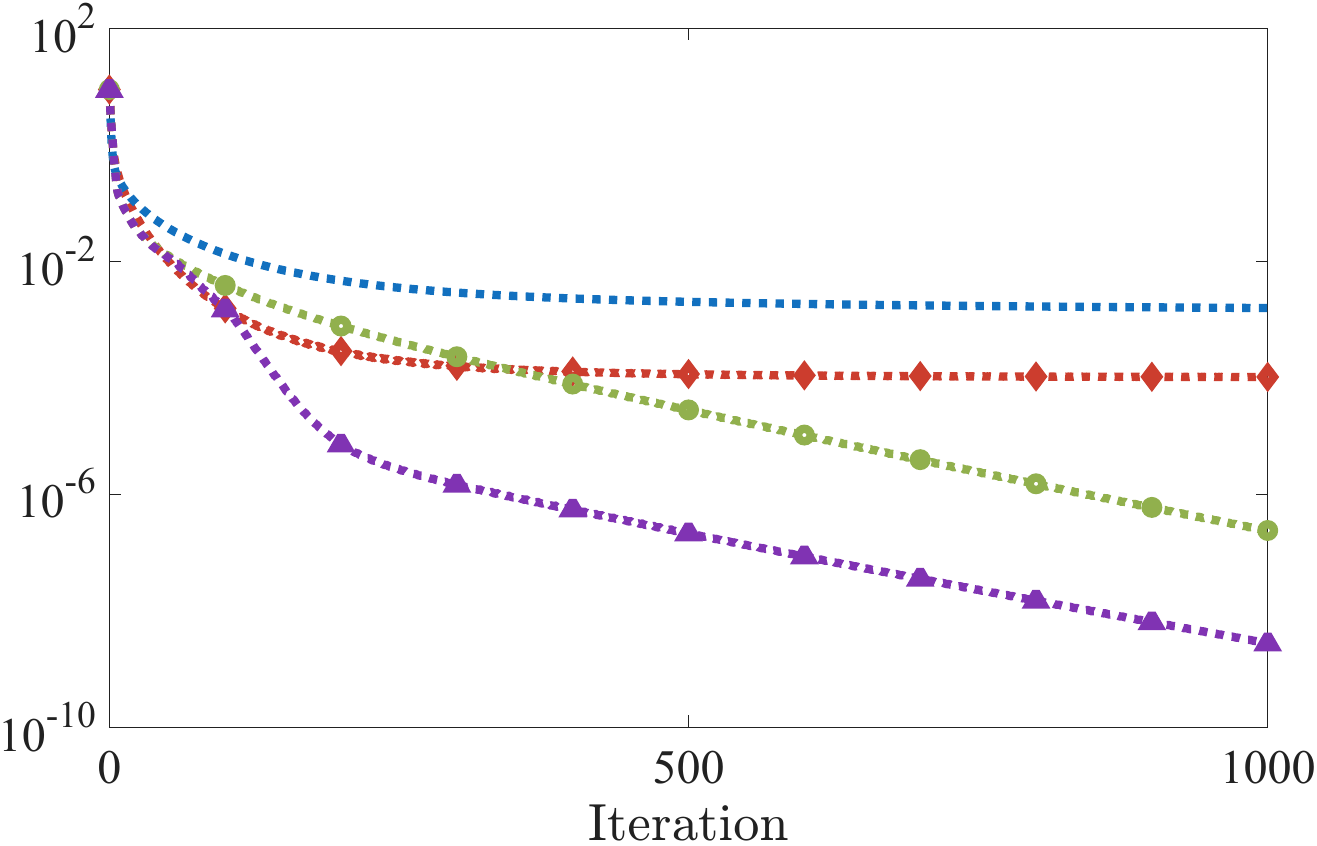}
        \caption{$B=5,\ \lambda=0.1.$}
        \label{fig:1c}
    \end{subfigure}
    \caption{Primal errors $\frac{1}{n}\sum_{i=1}^n \|x_i^k-x_i^\star\|^2$ in solving constrained logistic regression.}
    \label{fig:compare}
\end{figure*}

By Theorem \ref{thm:vary}, the dual error \(D(\mb{w}^k) - D^\star\) converges at an \(O(1/k)\) rate, and the primal errors \(\|\mb{x}^k - \mb{x}^\star\|\) and \(\|F(\mb{x}^k) - F^\star\|\) converge at an \(O(1/\sqrt{k})\) rate. 

\section{Numerical Experiment}\label{sec:Num_ex}

This section validates the effectiveness of the proposed algorithm in solving a constrained regularized logistic regression problem, where each node owns $M$ data samples, and the goal is to find a common decision that minimizes the total loss defined by data samples of all nodes:
\begin{align*}
    &\min_{x\in \mathbb{R}^d}~\sum_{i=1}^n \ell_i(x)\defeq\frac{1}{nM}\sum_{j=1}^M
  \log\left(1+e^{-b_{ij} a_{ij}^T x}\right)+\frac{\lambda}{2}\|x\|^2\\
    &\operatorname{s. t.}~x\in \bigcap_{i\in\mathcal{V}} \mc{B}(p_i,r_i),
\end{align*}
 where \(a_{ij}\in\mathbb{R}^d\) is the feature of the $j$th sample of node $i$ with the \(d\)-th entry equal to 1 and $b_{ij}$ is the corresponding label, $\lambda>0$ is the regularization parameter, and $\mc{B}(p_i,r_i)=\{x|~\|x-p_i\|\le r_i\}$ is the ball centered at $p_i\in\mathbb{R}^d$ with radius $r_i>0$. This problem corresponds to problem \eqref{eq:problem} with each \(
  f_i(x)=\ell_i(x)+\mc{I}_{\mc{B}_i(p_i,r_i)}(x)
  \) where $\mc{I}_{\mc{B}_i(p_i,r_i)}(x)$ is the indicator function of $\mc{B}_i(p_i,r_i)$. In the literature of distributed optimization over time-varying networks, only a few can handle distinct local constraints, and we compare the proposed FDGM-AA with FDGM\citep{Wu2019}, the distributed projected subgradient method\citep{Nedic2010} and the proximal minimization algorithm\citep{margellos2017distributed}. 

The experiment settings are as follows. We set $n=30$, $d=20$, and $M=20$. For each node \(i\), the numbers of the labels with \(b_{ij}=1\) and \(b_{ij}=-1\) are both \(M/2\). The first \(d-1\) entries of each \(a_{ij}\) are independently drawn from a normal distribution with mean $2$ and variance $8$, and the \(d\)-th entry is fixed to \(1\). We form a periodic time-varying network with period $B$ to satisfy Assumption \ref{assum:connect}. This is achieved by first generating a random connected graph $\mc{G}=(\mc{V},\mc{E})$ and then dividing $\mc{E}$ into $B$ subsets. At each iteration $k$, the edge set $\mc{E}^k$ is chosen as one of these subsets in a cyclic manner, so that the union of $\mc{E}^k$ over any period of $B$ equals $\mc{E}$. To maintain consistent with the references, we use fixed parameters for FDGM and FDGM-AA, and step-sizes of the form $c/k~(c>0)$ for the distributed projected subgradient method and the proximal minimization algorithm. We set the memory size $m=40$ for FDGM-AA and fine-tune the parameters of all algorithms for better performance. 

Fig. \ref{fig:compare} compares the performance of all methods and tests the effect of $B$ and $\lambda$, from which we make the following observations. First, in all settings FDGM-AA significantly and consistently outperforms other methods, which highlights the effectiveness of the proposed method. Second, the acceleration of FDGM-AA over FDGM is clear for large $k$ which corresponds to large $|\mc{I}_{ij}^k|$. Third, larger $B$ and smaller $\lambda$ leads to slower convergence of all methods, which coincides our intuition.


\section{Conclusion}\label{sec:Conc}
This paper has adapted the Anderson acceleration (AA) technique to solve distributed optimization problems over time-varying networks. In particular, we incorporated AA into the Fenchel dual gradient methods (FDGM), resulting in the FDGM-AA algorithm. To enable distributed implementation, we decomposed FDGM into a set of subproblems, employed AA to solve each subproblem, and integrated the results of all subproblems. To guarantee global convergence, we designed a novel safe-guard mechanism. Numerical experiments demonstrate the competitive performance of our method against existing ones. Future work will focus on relaxing the strong convexity assumption.

\vspace{-0.1cm}

\section*{Acknowledgments}
\vspace{-0.1cm}

This work is supported in part by the Guangdong Provincial Key Laboratory of Fully Actuated System Control Theory and Technology under grant No. 2024B1212010002, in part by the Shenzhen Science and Technology Program under grant No. JCYJ20241202125309014, and in part by the Shenzhen Science and Technology Program 
under grant No. KQTD20221101093557010.
\vspace{-0.1cm}

\section*{Appendix A: Proof of Lemma \ref{lem:always_descent}}\label{append:proof_lemma_descent}
\vspace{-0.1cm}

It is straightforward to see that if \eqref{eq:fixed_safe} holds, so does \eqref{eq:always_descent}. Therefore, to prove the lemma, it suffices to show that if \eqref{eq:fixed_safe} fails to hold and \((w_{ij}^{k+\frac{1}{2}}, w_{ji}^{k+\frac{1}{2}})\) is updated according to \eqref{eq:wij1/2}, then \eqref{eq:always_descent} still holds.

Suppose that \eqref{eq:fixed_safe} fails to hold. By the $L$-smoothness of $d_i$ and $d_j$, we have $d_i(w_{ij}^{k+\frac12})+d_j(w_{ji}^{k+\frac12}) -(d_i(w_i^k)+d_j(w_j^k))\le  \langle \nabla d_i(w_i^k),\, w_{ij}^{k+\frac12} - w_i^k\rangle+\langle \nabla d_j(w_j^k),\, w_{ji}^{k+\frac12} - w_j^k\rangle+ L\left(\big\|w_{ij}^{k+\frac12} - w_i^k \big\|^2+\big\| w_{ji}^{k+\frac12} - w_j^k \big\|^2\right)/2$, which, together with \eqref{eq:wij1/2}, yields \eqref{eq:always_descent}. 

\section*{Appendix B: Proof of Theorem \ref{thm:vary}}\label{append:proof_Thm}   
 
     The proof mainly consists of two parts: proving the non-increasing of \(\{D(\mb{w}^k)\}\), and bounding the total decrease of $D(\mb{w}^k)$ over \(B\) consecutive iterations.

    {\bf Part 1}: By \eqref{eq:separable_FDGM} and the convexity of \(d_i\), $d_i(w_i^{k+1}) \le d_i(w_i^k) + \sum_{j\in\mathcal{N}_i^k} h_{ij}^k \big(d_i(w_{ij}^{k+\frac{1}{2}}) - d_i(w_i^k)\big)$, summing which over all nodes \(i=1,\dots,n\) gives
\begin{equation}{\label{eq:Jensen}}
\begin{array}{ll}
   & D(\mb{w}^{k+1})-D(\mb{w}^k)\allowdisplaybreaks\\
   \le &\sum_{\{i,j\}\in \mathcal{E}^k} h_{ij}^k\big(d_i(w_{ij}^{k+\frac{1}{2}}) - d_i(w_i^k)\\
   & +d_j(w_{ji}^{k+\frac{1}{2}}) - d_j(w_j^k)\big)\allowdisplaybreaks\\
   \le& -\theta_1\sum_{\{i,j\}\in \mathcal{E}^k} h_{ij}^k\|\nabla d_i(w_i^k)-\nabla d_j(w_j^k)\|^2,
\end{array}
\end{equation}
where the last step uses \eqref{eq:always_descent}. By \eqref{eq:Jensen}, \(\{D(\mb{w}^k)\}\) is non-increasing.

{\bf Part 2}: We construct an upper bound on the accumulative descent of $D(\mb{w}^t)$ over $B$ steps: for some $\tau>0$,
\begin{equation}\label{eq:bounded}
    \begin{array}{ll}
       &\sum_{\{i,j\}\in\mathcal{\tilde{E}}^k}\|\nabla d_i(w_i^k)-\nabla d_j(w_j^k)\|^2\\
       \le& \tau \left(D(\mb{w}^k)-D(\mb{w}^{k+B})\right).
    \end{array}
\end{equation}
For simplicity, we define $a_i^k=\nabla d_i(w_i^k)$. To show \eqref{eq:bounded}, note that for any edge \(\{i,j\}\in \tilde{\mathcal{E}}^k\), we can find \(t_{ij}^k\in [k,k+B-1]\) such that \(\{i,j\}\in\mathcal{E}^{t^k_{ij}}\). For each \(\{i,j\}\in \tilde{\mathcal{E}}^k\), by the Cauchy-Schwarz inequality,
\begin{equation}\label{eq:grad_term}
    \begin{array}{ll}
       \|a_i^k-a_j^k\|^2
       \le& 3\|a_i^k-a_i^{t^k_{ij}}\|^2+3\|a_j^{t_{ij}^k}-a_j^k\|^2\\
       &+3\|a_i^{t^k_{ij}}-a_j^{t^k_{ij}}\|^2.
    \end{array}
\end{equation}
By \eqref{eq:Jensen},
\begin{equation}\label{eq:aitijk}
\begin{split}
     \sum_{\{i,j\}\in\mathcal{\tilde{E}}^k}\|a_i^{t^k_{ij}}-a_j^{t^k_{ij}}\|^2
     \le&\frac{1}{\underline{h}} \sum_{t=k}^{k+B-1} \sum_{\{i,j\}\in\mathcal{E}^t} h_{ij}^t \|a_i^t-a_j^t\|^2\\
     \le& \left(D(\mb{w}^k)-D(\mb{w}^{k+B})\right)/(\underline{h}\theta_1).
\end{split}
\end{equation}
By the smoothness of \(d_i\) and $t_{ij}^k-k\le B$ and using the Cauchy-Schwarz inequality, we have $\|a_i^k-a_i^{t^k_{{ij}}}\|^2 = \|\sum_{t=k}^{t^k_{{ij}}-1} (a_i^t-a_i^{t+1})\|^2
        \le B \sum_{t=k}^{k+B-1} L^2\|w_i^{t+1}-w_i^t\|^2$,
which yields
\begin{equation}\label{eq:smooth_a}
\begin{split}
   &\sum\limits_{\{i,j\}\in\mathcal{\tilde{E}}^k}\big(\|a_i^k-a_i^{t^k_{{ij}}}\|^2+\|a_j^{t_{{ij}}^k}-a_j^k\|^2\big)\\
    \le& BL^2\!\!\!\sum\limits_{\{i,j\}\in\mathcal{\tilde{E}}^k} \!\!\!\!\sum_{t=k}^{k+B-1}\!(\|w_i^{t+1}\!-\!w_i^t\|^2\!+\!\|w_j^{t+1}\!-\!w_j^t\|^2)\\
    \le& BL^2\tilde{\eta} \sum_{t=k}^{k+B-1} \sum_{i\in\mc{V}}\|w_i^{t+1}-w_i^t\|^2.
\end{split}
\end{equation}
By \eqref{eq:separable_FDGM}, the convexity of $\|\cdot\|^2$, and $\sum_{j \in \mathcal{N}_i^k} h_{ij}^k \le 1$
\begin{equation}\label{eq:wit1wit}
\begin{split}
   &\left\|w_i^{t+1}-w_i^t\right\|^2
   =\biggl\|\sum_{j\in \mc{N}_i^t} h_{ij}^t \left(w_{ij}^{t+\frac{1}{2}}-w_i^t\right)\biggr\|^2\allowdisplaybreaks\\
   &= \bigl(\sum_{j\in \mc{N}_i^t} h_{ij}^t\bigr)^2\cdot\biggl\|\frac{\sum_{j\in \mc{N}_i^t} h_{ij}^t \left(w_{ij}^{t+\frac{1}{2}}-w_i^t\right)}{\sum_{j\in \mc{N}_i^t} h_{ij}^t}\biggr\|^2\allowdisplaybreaks\\
   &\le \sum_{j\in \mc{N}_i^t}h_{ij}^t\left\|w_{ij}^{t+\frac{1}{2}}-w_i^t\right\|^2.
\end{split}
\end{equation}
By \eqref{eq:wit1wit}, \eqref{eq:always_descent}, and the first step of \eqref{eq:Jensen},
\begin{equation*}
\begin{split}
   &\sum_{i\in\mc{V}}\|w_i^{t+1}-w_i^t\|^2\le \sum_{i\in\mc{V}}\sum_{j\in\mc{N}_i^t}h_{ij}^t\|w_{ij}^{t+\frac{1}{2}}-w_i^t\|^2\\
    =& \sum\limits_{\{i,j\}\in\mathcal{E}^t}h_{ij}^t\big(\|w_{ij}^{t+\frac{1}{2}}-w_i^t\|^2+\|w_{ji}^{t+\frac{1}{2}}-w_j^t\|^2\big)\allowdisplaybreaks\\
    \le& -\frac{1}{\theta_2}\left(D(\mb{w}^{t+1})-D(\mb{w}^t)\right),
\end{split}
\end{equation*}
substituting which into \eqref{eq:smooth_a} yields
\begin{equation}\label{eq:cummulative_B}
\begin{split}
   &\sum\limits_{\{i,j\}\in\mathcal{\tilde{E}}^k}\big(\|a_i^k-a_i^{t^k_{{ij}}}\|^2+\|a_j^{t_{{ij}}^k}-a_j^k\|^2\big)\\
    \le& (BL^2\tilde{\eta}/\theta_2)\cdot(D(\mb{w}^k)-D(\mb{w}^{k+B})).
\end{split}
\end{equation}
By \eqref{eq:aitijk}, \eqref{eq:cummulative_B}, and \eqref{eq:grad_term}, we obtain \eqref{eq:bounded} with $\tau=3/(\underline{h}\theta_1)+3(BL^2\tilde{\eta}/\theta_2)$.

With the above results, we derive the convergence following the proof framework in \cite{Wu2019}. By the inequality \eqref{eq:bounded}, we have that for any $k\ge 0$,
\begin{equation}\label{eq:dual_gap}
   \begin{split}
      &(D(\mb{w}^{(k+1)B})-D^\star)-(D(\mb{w}^{kB})-D^\star)\\
      &=D(\mb{w}^{(k+1)B})-D(\mb{w}^{kB})\\
      &\le -\langle\nabla D(\mb{w}^{kB}), (\mc{L}_{\mathcal{\tilde{G}}^{kB}}\otimes I_d) \nabla D(\mb{w}^{kB})\rangle/\tau\\
      &\le -\underline{\lambda}\|P \nabla D(\mb{w}^{kB})\|^2/\tau,
   \end{split}
\end{equation}
where $P$ is the projection matrix onto $S=\{\mb{w}|~w_1+\ldots+w_n=\mb{0}\}$ and the last step uses $\operatorname{Range}(\mc{L}_{\mathcal{\tilde{G}}^{kB}}\otimes I_d)=S$ due to the connectivity of $\mathcal{\tilde{G}}^{kB}$.

Note that $\mb{w}^{kB}\in S_0$ by \eqref{eq:Jensen}, which, together with the convexity of $D$, $\mb{w}^{kB}, \mb{w}^\star\in S$, and $\mb{w}^\star\in S_0$, yields
\[
\begin{split}
    D(\mb{w}^{kB})-D^\star&\le \|P \nabla D(\mb{w}^{kB})\|\cdot\|\mb{w}^{kB}-\mb{w}^\star\|\\
                     &\le R_0\|P \nabla D(\mb{w}^{kB})\|.
\end{split}\]
Substituting the above equation into \eqref{eq:dual_gap} gives \(D(\mb{w}^{(k+1)B})-D^\star)-(D(\mb{w}^{kB})-D^\star)\le -\underline{\lambda}(D(\mb{w}^{kB})-D^\star)^2/(\tau R_0^2)\), which, according to \cite[Lemma 6, Section 2.2.1]{polyak1987introduction}, yields
\begin{equation}\label{eq:D_kB}
   \begin{split}
      D(\mb{w}^{kB})-D^\star\le \frac{D(\mb{w}^0)-D^\star}{1+\underline{\lambda}k(D(\mb{w}^0)-D^\star)/(\tau R_0^2)}.
   \end{split}
\end{equation}
By the above equation and the non-increasing property of the sequence \(\{D(\mb{w}^k)\}_{k=0}^{\infty}\), we obtain \eqref{eq:dual_rate}.

Next, we derive \eqref{eq:prim_var_rate}--\eqref{eq:prim_func_rate}. By $\mb{x}^k= \nabla D(\mb{w}^k)$, $\mb{x}^\star=\nabla D(\mb{w}^\star)$, and the $L$-smoothness of $D$, we have
\begin{equation}\label{eq:xkxstar_D}
    \begin{split}
        \|\mb{x}^k-\mb{x}^\star\|\le \sqrt{2L(D(\mb{w}^k)-D^\star)},
    \end{split}
\end{equation}
substituting which into \eqref{eq:dual_rate} gives \eqref{eq:prim_var_rate}. Since $P$ is the projection matrix of a space, we have $\|P\|_2\le 1$, which, together with $P\mb{x}^\star=\mb{0}$, yields
\begin{equation}\label{eq:Pxk_expansion}
\begin{split}
    \|P\mb{x}^k\| &= \|P(\mb{x}^k-\mb{x}^\star)\|\le \|P\|\cdot\|\mb{x}^k-\mb{x}^\star\|\\
    &\le \|\mb{x}^k-\mb{x}^\star\|\le \sqrt{2L(D(\mb{w}^k)-D^\star)}.
\end{split}
\end{equation}
Moreover, by the definition of $D(\mb{w})$,
\begin{align*}
        \langle \mb{w}^k, \mb{x}^k\rangle - F(\mb{x}^k)&\ge \langle \mb{w}^k, \mb{x}^\star\rangle - F(\mb{x}^\star)\\
        \langle \mb{w}^\star, \mb{x}^\star\rangle - F(\mb{x}^\star)&\ge \langle \mb{w}^\star, \mb{x}^k\rangle - F(\mb{x}^k).
\end{align*}
which, together with $\mb{w}^k, \mb{w}^\star\perp \bx^\star$,  leads to
\begin{equation}\label{eq:F_intm}
\begin{split}
    |F(\mb{x}^k)-F^\star|&\le \max\{|\langle \mb{w}^\star, \mb{x}^k\rangle|, |\langle \mb{w}^k, \mb{x}^k\rangle|\}\\
    &=\!\max\{|\langle \mb{w}^\star, P\mb{x}^k\rangle|, |\langle \mb{w}^k,\!P\mb{x}^k\rangle|\}\\
    &\le \max\{\| \mb{w}^k\|, \|\mb{w}^\star\|\}\cdot\|P\mb{x}^k\|,
\end{split}
\end{equation}
where the second step uses $P\mb{w}^k=\mb{w}^k$, $P\mb{w}^\star=\mb{w}^\star$, and $P=P^T$. Moreover, since $\mb{w}^k,\mb{w}^\star,\mb{w}^0\in S_0$, we have
\begin{align*}
    &\|\mb{w}^k\|\le \|\mb{w}^k-\mb{w}^0\|+\|\mb{w}^0\|\le R_0+\|\mb{w}^0\|\\
    &\|\mb{w}^\star\|\le \|\mb{w}^\star-\mb{w}^0\|+\|\mb{w}^0\|\le R_0+\|\mb{w}^0\|
\end{align*}
substituting which and \eqref{eq:Pxk_expansion} into \eqref{eq:F_intm} gives \eqref{eq:prim_func_rate}.
\bibliographystyle{plain}        
\bibliography{autosam}           
\end{document}